\documentclass[11pt,a4paper]{article}
\usepackage[latin1]{inputenc}
\usepackage{amsmath}
\usepackage{amsthm}
\usepackage{amsfonts}
\usepackage{amssymb,url}
\usepackage{graphicx}
\usepackage[margin=2.7cm]{geometry}
\usepackage{xcolor}
\newtheorem{theorem}{Theorem}
\newtheorem{lemma}{Lemma}

\author{Mahadi Ddamulira, Paul Emong$ ^{*} $, and Geoffrey Ismail Mirumbe}
\title{Members of Narayana's cow sequence that are concatenations of two repdigits}
\date{}

\begin{document}
\maketitle

\begin{abstract}
\noindent Let $ (N_n)_{n\ge 0}$ be the Narayana's cow sequence defined by a third-order recurrence relation $ N_0=0,\ N_1= N_2=1 $, and $ N_{n+3}=N_{n+2}+N_n $ for all $ n\ge 0 $. In this paper, we determine all Narayana numbers that are concatenations of two repdigits. The proof of our main theorem uses lower bounds for linear forms in logarithms and a version of the Baker-Davenport reduction method in Diophantine approximation.
\end{abstract}

\noindent
{\bf Keywords and phrases}: Narayana's cow sequence; Repdigit; Linear forms in logarithms; Baker's method.

\noindent 
{\bf 2020 Mathematics Subject Classification}: 11B37, 11D61, 11J86.

\noindent 
\thanks{$ ^{*} $ Corresponding author}

\section{Introduction.}

\noindent In $1356$, the Indian mathematician Narayana Pandit wrote his famous book titled
Ganita Kaumudi where he proposed the following problem of a herd of cows and calves: A cow produces one calf every year. Beginning in its fourth year, each calf produces one calf at the beginning of each year. How many calves are there altogether after 20 years? \cite{Alou}.
\medskip

\noindent This problem is translated into modern language of recurrence sequences. We observe that the number of cows increased by one after one year, increased by one after two years, increased by one after three years and increased by two after four years and so on. Hence, we obtain the sequence $1, 1, 1, 2,\cdots$. In the $n$-th year, Narayana's cow sequence problem can be written as the following linear recurrence sequence:

$$N_{n+3} = N_{n+2} + N_{n},$$ \text{for all $n \geq 0$, with the initial conditions $N_{0} = 0$, and $N_{1} =N_{2} = 1$}.
Its first few terms are
\begin{align*}
(N_n)_{n \geq 0} = \{0,1,1,1,2,3,4,6,9,13,19, \ldots\}.
\end{align*}
\medskip

\noindent A \textit{repdigit} is a positive integer $N$ that has only one distinct digit when written in base 10. That is, $N$ is of the form
\begin{align*}
N = \overline{\underbrace{d\cdots d}_\text{$\ell$ times}} = d \left( \frac{10^\ell - 1}{9} \right),
\end{align*}
for some non negative integers $d$ and $\ell$ with $1 \leq d \leq 9$ and $\ell \geq 1$. 
\medskip

\noindent The problem of finding repdigits in a linear recurrence sequence has been
studied. For example, in \cite{MD2, Dda}, Ddamulira studied the problem of finding Padovan and tribonacci numbers which are concatenations of two repdigits, respectively. In \cite{Ala}, the authors proved that the only Fibonacci numbers that are concatenations of two repdigits are $\{13, 21, 34, 55, 89, 144, 233, 377\}$. Erduvan and Keskin, in their paper obtained all Lucas numbers which are concatenations of two repdigits, see \cite{Eduv1}. In \cite{Eduv2}, the same authors determined Lucas numbers which are concatenations of three repdigits. Furthermore, Rayaguru and Panda \cite{Ray} showed that 35 is the only balancing number that can be written as a concatenation of two repdigits. More related results in this direction include: the result of Qu and Zeng \cite{Q}, the result of Bravo et al. \cite{Brav2} and the result of Trojovsk\'{y} \cite{Tr}. 
\medskip

\noindent Continuing in the same direction of research, we study the problem of writing all Narayana numbers which are concatenations of two repdigits. To be precise, we find all solutions of the Diophantine equation
\begin{align}\label{eq1}
N_n = \overline{\underbrace{d_1\cdots d_1}_\text{$m_{1}$ times} \underbrace{d_2\cdots d_2}_\text{$m_{2}$ times} },
\end{align}
in non-negative integers $n,d_{1},d_{2},m_{1},m_{2}$ with $n\geq 0,m_{1}\geq m_{2}\geq 1$ and $d_1 , d_2 \in \{0,1, 2, 3,\ldots, 9 \},\\d_{1}>0$.
\medskip

\noindent Thus, the main result is the following:
\begin{theorem}\label{thm1x}
The only Narayana numbers which are concatenations of two repdigits are 
\begin{align*}
N_n \in \{13,19,28,41,60,88,277\}.
\end{align*}

\end{theorem}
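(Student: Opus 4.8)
The plan is to combine a Binet-type formula for the Narayana numbers with Baker's theory of linear forms in logarithms, and then to finish by a Baker--Davenport/Dujella--Peth\H{o} reduction. First I would record the analytic preliminaries. The characteristic polynomial $x^{3}-x^{2}-1$ has one real root $\alpha\approx 1.4656$ and two complex conjugate roots $\beta,\gamma$ of modulus $\alpha^{-1/2}<1$, and $N_{n}=a\alpha^{n}+b\beta^{n}+c\gamma^{n}$ for suitable constants with $a\in\mathbb{Q}(\alpha)$, $a>0$, $c=\bar b$, and $|b\beta^{n}+c\gamma^{n}|<1$ for all $n\ge 0$; moreover $\alpha^{n-2}\le N_{n}\le\alpha^{n-1}$ for $n\ge 1$. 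Writing $m:=m_{1}+m_{2}$ for the total number of digits of $N_{n}$, the inequalities $10^{m-1}\le N_{n}<10^{m}$ together with the $\alpha$-bounds show that $m$ and $n$ lie within absolute constant factors of one another (concretely $m<0.17\,n$ and $n<6.1\,m$ once $n$ is large). A direct computer search disposes of \eqref{eq1} whenever $n$, hence $m$, is below a modest threshold and returns exactly the asserted list; from here on I assume $n$ is large.

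Clearing denominators in \eqref{eq1} gives the identity $9N_{n}=d_{1}\cdot 10^{m}+(d_{2}-d_{1})\,10^{m_{2}}-d_{2}$. The \emph{first} linear form arises by isolating the leading term on each side: $|9a\alpha^{n}-d_{1}\cdot 10^{m}|\le 10^{m_{2}+1}+d_{2}+9|b\beta^{n}+c\gamma^{n}|$, whence, dividing by $d_{1}\cdot 10^{m}$ and using $m\ge m_{1}$, one gets $|\Lambda_{1}|<C_{1}\cdot 10^{-m_{1}}$ with $\Lambda_{1}:=9a\,d_{1}^{-1}\,\alpha^{n}\,10^{-m}-1$ and $C_{1}$ absolute. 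That $\Lambda_{1}\neq 0$ follows by applying the Galois automorphism $\alpha\mapsto\beta$ of $\mathbb{Q}(\alpha)$: equality would force $9|b|\,|\beta|^{n}=d_{1}10^{m}\ge 10$, which is impossible for large $n$. Applying Matveev's theorem to $\Lambda_{1}$ --- a form in the three algebraic numbers $9a/d_{1}$, $\alpha$, $10$ over the cubic field $\mathbb{Q}(\alpha)$, with $\max|b_{i}|\asymp n$ --- yields $\log|\Lambda_{1}|\gg-\log n$, and comparison with the upper bound gives $m_{1}\ll\log n$.

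For the \emph{second} linear form I rewrite the identity as $9N_{n}+d_{2}=10^{m_{2}}\bigl(d_{1}\cdot 10^{m_{1}}-d_{1}+d_{2}\bigr)=:10^{m_{2}}K$, so that $|9a\alpha^{n}-K\cdot 10^{m_{2}}|=|d_{2}+9(b\beta^{n}+c\gamma^{n})|$ is bounded by an absolute constant; dividing by $9a\alpha^{n}\asymp N_{n}$ gives $|\Lambda_{2}|<C_{2}\,\alpha^{-n}$ with $\Lambda_{2}:=K\,(9a)^{-1}\,\alpha^{-n}\,10^{m_{2}}-1$. Here $K$ is a positive integer with $K<10^{m_{1}+1}$, so by the previous paragraph it has logarithmic height $\ll\log n$. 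Again $\Lambda_{2}\neq 0$ by the same conjugation argument, and Matveev applied to the three numbers $K/(9a)$, $\alpha$, $10$ --- one of which now carries height $\ll\log n$ --- gives $\log|\Lambda_{2}|\gg-(\log n)^{2}$. Comparing with $|\Lambda_{2}|<C_{2}\alpha^{-n}$ produces $n\ll(\log n)^{2}$, hence an effective absolute bound of the form $n<n_{0}$.

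The final stage reduces this enormous bound. Since $m_{1}\ll\log n_{0}$ is small, I loop over the finitely many triples $(d_{1},d_{2},m_{1})$; for each, $\log\bigl(K/(9a)\bigr)$ is a fixed real number, and $|\Lambda_{2}|<C_{2}\alpha^{-n}$ translates into $\bigl|\,n\log\alpha-m_{2}\log 10-\log(K/(9a))\,\bigr|<C_{2}'\,\alpha^{-n}$. Feeding a convergent of the continued fraction of $\log\alpha/\log 10$ with denominator exceeding $6n_{0}$ into the Dujella--Peth\H{o} lemma forces $n$ below a small explicit bound (a few hundred at most), possibly after iterating the reduction; a direct check over the surviving range then recovers precisely $N_{n}\in\{13,19,28,41,60,88,277\}$. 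I expect the obstacles here to be bookkeeping rather than conceptual: making the first Matveev application pin down $m_{1}$ uniformly (the point is that $|\Lambda_{1}|<C_{1}10^{-m_{1}}$ holds unconditionally, so no case split on the size of $m_{2}$ is needed), tracking the absolute constants so that the numerical bound $n_{0}$ is honest, and checking that the Dujella--Peth\H{o} hypotheses (in particular the non-vanishing of the relevant quantity $\|q\mu\|-M\|q\theta\|$) hold for every triple $(d_{1},d_{2},m_{1})$ in the loop.
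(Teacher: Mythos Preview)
Your approach is essentially identical to the paper's: the same two linear forms $\Lambda_{1},\Lambda_{2}$, the same Galois-conjugation argument for their nonvanishing, and the same Matveev-then-Dujella--Peth\H{o} strategy.

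The one point that needs attention is your claim in the final stage that ``$m_{1}\ll\log n_{0}$ is small'' and hence one can loop over all triples $(d_{1},d_{2},m_{1})$. The implied constant coming out of Matveev in the bound $m_{1}\ll\log n$ is of order $10^{12}$, so with $n_{0}\approx 10^{29}$ this only gives $m_{1}\lesssim 10^{14}$, which is not feasible to enumerate. The paper handles this by applying the Dujella--Peth\H{o} reduction \emph{first} to $\Lambda_{1}$, i.e.\ to the inequality
\[
\Bigl|(m_{1}+m_{2})\tfrac{\log 10}{\log\alpha}-n+\tfrac{\log(d_{1}/9a)}{\log\alpha}\Bigr|<\frac{C}{10^{m_{1}}},
\]
which brings $m_{1}$ down to about $34$; only then does it loop over $(d_{1},d_{2},m_{1})$ and apply the second reduction to $\Lambda_{2}$. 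Once you insert that missing first reduction step, your argument coincides with the paper's.
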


\section{Preliminaries.}
Here, we state some facts about the Narayana's cow sequence, theorem and lemmas that are relevant in the proof of the main results.

\subsection{Some properties of the Narayana's cow sequence.}
First, the Binet's formula for the Narayana sequence is written as
\begin{align}\label{eq2}
	N_n = a\alpha^n + b\beta^n + c\gamma^n.
\end{align}
for all $n \geq 0$,
The characteristic equation of the Narayana's cow sequence is given by\\ $\varphi (x) := x^3 - x^2 - 1=0$, having roots $\alpha (\approx 1.46557),~\beta~\text{and}~\gamma = \overline{\beta}$ with $|\beta|=|\gamma|<1$, where
\begin{align*}
a = \frac{\alpha}{(\alpha-\beta)(\alpha-\gamma)} ,~b=\frac{\beta}{(\beta-\alpha)(\beta-\gamma)} ~\text{and}~ c= \frac{\gamma}{(\gamma-\alpha)(\gamma-\beta)}.
\end{align*}
In addition, $$a=\displaystyle\frac{\alpha^{2}}{\alpha^{3}+2},$$ and its minimal polynomial over integers is $31x^{3}-3x-1$.
Setting
\begin{align*}
t(n) := N_n - a\alpha^n =  b\beta^n + c\gamma^n,~ \text{we notice that}~ |t(n)| < \frac{1}{\alpha^{n/2}} \quad \text{for all} \quad n \geq 1.
\end{align*}
Furthermore, it can be proved by induction, that
\begin{align}\label{eq3}
\alpha^{n-2} \leq N_n \leq \alpha^{n-1} ~\text{for all}~ n\geq 1.
\end{align}
Observe that the characteristic polynomial $\varphi(x)$ is irreducible in $\mathbb{Q}[x]$.
\noindent Let $\mathbb{K} := \mathbb{Q}(\alpha, \beta)$ be the splitting field of the polynomial $\varphi$ over $\mathbb{Q}$. Then $[\mathbb{K}: \mathbb{Q}] = 6$ and $[\mathbb{Q}(\alpha): \mathbb{Q}] = 3$. The Galois group of $\mathbb{K}/\mathbb{Q}$ is given by
\begin{align*}
\mathcal{G} := \text{Gal}(\mathbb{K}/\mathbb{Q}) \cong \{ (1), (\alpha \beta), (\alpha \gamma), (\beta \gamma), (\alpha \beta \gamma)  \} \cong S_3.
\end{align*}
We identify the automorphisms of $\mathcal{G}$ with the permutation group of the zeroes of $\varphi$. We highlight the permutation $(\alpha \beta)$, corresponding to the automorphism $\sigma : \alpha \mapsto \beta, \beta \mapsto \alpha, \gamma \mapsto \gamma$, which we use later to obtain a contradiction on the size of the absolute value of a certain bound.

\subsection{Linear forms in logarithms.}

\noindent To solve the Diophantine equations involving repdigits and the terms of binary recurrence sequences, many authors have used Baker's theory to reduce lower bounds concerning linear forms in logarithms of algebraic numbers. These lower bounds play an important role while solving such Diophantine equation. We start with recalling some basic definitions and results from algebraic
number theory.
\medskip

\noindent Let $\eta$ be an algebraic number of degree $d$ with minimal primitive polynomial over the integers
\begin{align*}
g(x) = a_0 \prod_{j = 1}^d (x - \eta^{(j)}),
\end{align*}
where the leading coefficient $a_0$ is positive and the $\eta^{j}$'s are the conjugates of $\eta$. We define the logarithmic height of $\eta$ by
\begin{align*}
h(\eta) := \frac{1}{d} \left( \log a_0 + \sum_{j = 1}^d \log \left( \max \{|\eta^{(j)}|, 1  \}  \right)   \right).
\end{align*}
Note that, if $\eta = \frac{p}{q} \in \mathbb{Q}$ with $gcd(p,q)=1$ and $q > 0$, then the above definition reduces to $$h(\eta) = \log \max \{ |p|,q \}.$$
The following are the  properties of the logarithmic height function, which will be used in the subsequent sections of this paper without reference:
\begin{align*}
h(\eta_1 \pm \eta_2) & \leq h(\eta_1) + h(\eta_2) + \log 2, \\
h(\eta_1 \eta_2 ^{\pm}) & \leq h(\eta_1) + h(\eta_2), \\
h(\eta^s) & = |s| h(\eta), \quad (s \in \mathbb{Z}).
\end{align*}
\noindent The following theorem is useful in obtaining the lower bound. We us the version of Baker's theorem proved by Bugeaud, Mignotte and Siksek (\cite{BMS}, Theorem 9.4).

\begin{theorem}[Bugeaud, Mignotte, Siksek, \cite{BMS}]\label{thm2}
Let $\eta_1, \ldots, \eta_t$ be positive real algebraic numbers in a real algebraic number field $\mathbb{K} \subset \mathbb{R}$ of degree $D$. Let $b_1, \ldots, b_t$ be nonzero integers such that 
\begin{align*}
\Gamma := \eta_1 ^{b_1} \ldots \eta_t ^{b_t} - 1 \neq 0.
\end{align*}
Then
\begin{align*}
\log | \Gamma| > - 1.4 \times 30^{t+3} \times t^{4.5} \times D^2 (1 + \log D)(1 + \log B)A_1 \ldots A_t,
\end{align*}
where
\begin{align*}
B \geq \max \{|b_1|, \ldots, |b_t| \},
\end{align*}
and
\begin{align*}
A_j \geq \max \{ D h(\eta_{j}), |\log \eta_{j}|, 0.16  \}, \quad \text{for all} \quad j = 1, \ldots,t.
\end{align*}
\end{theorem}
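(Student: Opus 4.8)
The plan is to prove this as a statement in the transcendence theory of logarithms, following the method of Baker as refined by Matveev; indeed the displayed inequality with its explicit constant $1.4\times 30^{t+3}\times t^{4.5}$ is precisely Matveev's lower bound, of which the formulation of Bugeaud, Mignotte and Siksek is a convenient restatement. The first reduction is to pass from the multiplicative quantity $\Gamma$ to the additive linear form $\Lambda := b_1\log\eta_1+\cdots+b_t\log\eta_t$, using the relation $\Gamma=e^{\Lambda}-1$. Since $|e^x-1|\ge |x|/2$ for $|x|$ small, a lower bound for $|\Lambda|$ transfers to one for $|\Gamma|$ up to a harmless factor, so the task becomes to bound $|\Lambda|$ from below given that $\Lambda\neq 0$ (the hypothesis $\Gamma\neq 0$ guaranteeing $\Lambda\neq 0$). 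I would argue by contradiction, assuming $|\Lambda|$ smaller than the right-hand side, and then introduce auxiliary integer parameters -- an interpolation length $L$, a derivative order $K$, and a set of evaluation points -- whose sizes are tuned in terms of $D$, $B$ and the $A_j$.

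The heart of the argument is the construction of an auxiliary analytic function. Using Siegel's lemma (the Thue--Siegel pigeonhole principle), I would produce a nonzero polynomial $P$ in $t$ variables with rational integer coefficients of controlled height so that
\[
F(z)=\sum_{\lambda}P(\lambda)\,\eta_1^{\lambda_1 z}\cdots\eta_t^{\lambda_t z}
=\sum_{\lambda}P(\lambda)\,e^{z(\lambda_1\log\eta_1+\cdots+\lambda_t\log\eta_t)},
\]
where $\lambda=(\lambda_1,\dots,\lambda_t)$ runs over a box of integer tuples, vanishes together with many of its derivatives at a block of consecutive integers $z=0,1,\dots$. The counting in Siegel's lemma forces the number of unknown coefficients of $P$ to exceed the number of linear vanishing conditions; the bookkeeping of the heights of the algebraic numbers $\eta_i^{\lambda_i z}$, which is where the logarithmic heights $h(\eta_j)$ and hence the $A_j$ enter, is what ultimately shapes the factor $D^2(1+\log D)$ in the exponent.

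Next comes the extrapolation step. Because $\Lambda$ is assumed tiny, the exponents $\lambda_1\log\eta_1+\cdots+\lambda_t\log\eta_t$ cluster tightly, so the high-order vanishing of $F$ at the initial points, combined with the maximum modulus principle (a Schwarz-type estimate on a disc in $\mathbb{C}$), forces $F$ and its derivatives to be extraordinarily small at a far larger range of integer points than were built in. Quantitatively, one shows that these values, which are algebraic numbers in a field of degree bounded in terms of $D$, become smaller than $1$ in modulus after clearing denominators. The contradiction is then arithmetic: either $F$ vanishes identically on the enlarged set, which is excluded by a zero (multiplicity) estimate on the algebraic torus $\mathbb{G}_m^{t}$ -- here one invokes a Philippon--Masser--W\"ustholz type multiplicity theorem controlling how often such an exponential polynomial can vanish -- or else some derivative value is a nonzero algebraic integer, for which the fundamental size inequality (a nonzero algebraic integer has norm of absolute value at least $1$, so its modulus cannot be too small relative to its conjugates) clashes with the analytic upper bound from extrapolation.

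Finally, optimizing $L$, $K$ and the number of evaluation points against one another produces exactly the numerical constant $1.4\times 30^{t+3}\times t^{4.5}$, the dependence $(1+\log B)$, and the product $A_1\cdots A_t$. The hard part -- and the reason the explicit constants are so delicate -- is the interplay between the zero estimate and the simultaneous tuning of all parameters; it is precisely the sharpening of these two ingredients, together with Matveev's inductive treatment that avoids assuming the $\log\eta_i$ to be linearly independent over $\mathbb{Q}$, that yields the comparatively small exponent $t^{4.5}$ rather than the far larger constants of earlier versions of Baker's theorem. Since reconstructing this entire machinery is well beyond the needs of the present paper, I would in practice cite the result as stated and proceed to apply it.
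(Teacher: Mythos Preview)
The paper does not prove this theorem at all: it is quoted verbatim as a black-box tool from the literature (Bugeaud--Mignotte--Siksek, which is itself a restatement of Matveev's theorem), with no argument given. Your final sentence --- that in practice you would simply cite the result and apply it --- is therefore exactly what the paper does, and is the appropriate course here.

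The preceding sketch you give (reduction to the linear form $\Lambda$, Siegel's lemma to build an auxiliary exponential polynomial, Schwarz-type extrapolation, a zero/multiplicity estimate on $\mathbb{G}_m^t$, and parameter optimization yielding the explicit constant) is a faithful high-level outline of the Baker--Matveev machinery, but it goes far beyond anything the present paper attempts; there is no corresponding passage in the paper to compare it with. If you intend to include such a sketch, be aware that it is purely expository surplus relative to this paper, and that turning it into an actual proof would require dozens of pages of hard estimates (in particular the precise form of the multiplicity lemma and the inductive step that produces the $t^{4.5}$), none of which your outline supplies in checkable detail. For the purposes of this paper, the citation suffices.
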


\noindent Next, we state the following Lemma that leads to an upper bound of $n$.
\begin{lemma}[G\'{u}zman S\'{a}nchez, Luca, \cite{GSL}]\label{l1}
	Let $r \geq 1$ and $H > 0$ be such that $H > (4r^2)^r$ and $H > L/(\log L)^r$. Then
	\begin{align*}
		L < 2^r H (\log H)^r.
	\end{align*}
\end{lemma}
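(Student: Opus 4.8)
The plan is to treat this as a purely analytic bootstrapping estimate, independent of any number theory. We may assume $L>1$, since otherwise the stated bound is trivial (the right-hand side $2^r H(\log H)^r$ already exceeds $H>1$). Writing the hypothesis as $L<H(\log L)^r$, the key observation is that \emph{it suffices to prove the intermediate bound $L<H^2$}. Indeed, $L<H^2$ gives $\log L<2\log H$, hence $(\log L)^r<2^r(\log H)^r$, and substituting this back into $L<H(\log L)^r$ yields $L<H\cdot 2^r(\log H)^r=2^r H(\log H)^r$, which is the desired conclusion. So the whole problem reduces to showing $L<H^2$, and this is where the hypothesis $H>(4r^2)^r$ enters.

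To prove $L<H^2$ I would argue by contradiction, assuming $L\ge H^2$. Set $u:=\log L$ and $v:=H^{1/r}$, so that the hypothesis $H>(4r^2)^r$ reads $v>4r^2$. Combining $H^2\le L$ with $L<Hu^r$ gives $H<u^r$, i.e.\ $v<u$; in particular $u>v>4r^2>e$. Taking logarithms in $L<Hu^r$ and using $\log H=r\log v$, I obtain
\begin{align*}
u<\log H+r\log u=r\log(uv)<r\log(u^2)=2r\log u,
\end{align*}
where the middle inequality uses $v<u$. Thus the contradiction hypothesis forces $u<2r\log u$.

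On the other hand, the function $x\mapsto x/\log x$ is increasing on $(e,\infty)$ (its derivative is $(\log x-1)/(\log x)^2>0$ there), so from $u>4r^2>e$ we get $u/\log u>4r^2/\log(4r^2)$. A short elementary check shows $4r^2\ge 2r\log(4r^2)$ for every $r\ge 1$: dividing by $2r$ this is $2r-2\log r\ge\log 4$, and the left-hand side is increasing in $r$ (its derivative is $2-2/r\ge 0$) with value $2>\log 4$ at $r=1$. Hence $u/\log u>2r$, i.e.\ $u>2r\log u$, contradicting the inequality $u<2r\log u$ derived above. This contradiction establishes $L<H^2$ and completes the proof.

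The main obstacle, and the only place real care is needed, is making the constants fit: the reduction to $L<H^2$ is forced once one wants the clean factor $2^r$, but it succeeds only because the threshold $(4r^2)^r$ pushes $u=\log L$ past the point where $x/\log x$ exceeds $2r$. Had the hypothesis carried a weaker constant, the monotonicity estimate would fail, so verifying the elementary inequality $2r-2\log r\ge\log 4$ and locating the increasing branch of $x/\log x$ at $x>e$ is the genuine content; everything else is bookkeeping with logarithms.
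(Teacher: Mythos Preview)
Your argument is correct. The paper does not supply its own proof of this lemma; it is quoted verbatim from the cited work of G\'uzman S\'anchez and Luca, so there is nothing in the paper to compare against. Your self-contained bootstrapping---reducing to $L<H^{2}$ and then ruling that out via the monotonicity of $x/\log x$ on $(e,\infty)$ together with the elementary bound $4r^{2}\ge 2r\log(4r^{2})$---is a clean way to establish the estimate directly.
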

\subsection{Reduction procedure.}
The bounds on the variables obtained via Baker's theorem are too large for any computational purposes. To reduce the bounds, we use Baker-Davenport reduction method \cite{Brav}, which is a variation of a result due to  Dujella and Peth\H{o} (\cite{DP}, Lemma 5a). For a real number $r$, we denote by $\parallel r \parallel$ the quantity $\min \{|r - n| : n \in \mathbb{Z} \}$, the distance from $r$ to the nearest integer.
\begin{lemma}[Dujella, Peth\H o, \cite{DP}]\label{l2}
Let $\kappa \neq 0, A, B$ and $\mu$ be real numbers such that $A > 0$ and $B > 1$. Let $M > 1$ be a positive integer and suppose that $\frac{p}{q}$ is a convergent of the continued fraction expansion of $\tau$ with $q > 6M$. Let 
\begin{align*}
\epsilon := \parallel \mu q \parallel - M \parallel \tau q \parallel.
\end{align*}
If $\epsilon > 0$, then there is no solution of the inequality
\begin{align*}
0 < |m \tau - n + \mu| < AB^{-k}
\end{align*}
in positive integers $m,n,k$ with
\begin{align*}
\frac{\log (Aq/\epsilon)}{\log B} \leq k \quad \text{and} \quad m \leq M.
\end{align*}
\end{lemma}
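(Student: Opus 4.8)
The plan is to argue by contradiction, exploiting the best-approximation property of continued-fraction convergents; throughout I treat $\tau$ as the irrational number whose convergent is $p/q$. Suppose, contrary to the claim, that there exist positive integers $m,n,k$ with $m \le M$ and $k \ge \log(Aq/\epsilon)/\log B$ for which $0 < |m\tau - n + \mu| < AB^{-k}$. The entire argument hinges on converting this good rational approximation into a bound that contradicts the standing hypothesis $\epsilon > 0$.

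First I would record the continued-fraction input: because $p/q$ is a convergent of $\tau$ and $q > 6M$, the denominator is large enough that $|q\tau - p| < 1/q < 1/(6M) < 1/2$, so $p$ is the integer nearest to $q\tau$ and therefore $|q\tau - p| = \parallel q\tau \parallel$. This is the only place where the threshold $q > 6M$ is genuinely needed, and it simultaneously forces $M\,\parallel q\tau \parallel < 1/6$, leaving room for $\epsilon$ to be positive. The core computation is then purely algebraic: multiplying $m\tau - n + \mu$ by $q$ and substituting $q\tau = p + (q\tau - p)$ yields the identity
$$q\mu - (qn - mp) = q(m\tau - n + \mu) - m(q\tau - p),$$
in which the quantity $qn - mp$ is an integer.

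Since $qn - mp \in \mathbb{Z}$, the left-hand side dominates $\parallel q\mu \parallel$ in absolute value, so taking absolute values and applying the triangle inequality to the right-hand side gives
$$\parallel q\mu \parallel \le q\,|m\tau - n + \mu| + m\,|q\tau - p| < q\,AB^{-k} + M\,\parallel q\tau \parallel,$$
where I have used $m \le M$, the identity $|q\tau - p| = \parallel q\tau \parallel$, and the assumed strict bound $|m\tau - n + \mu| < AB^{-k}$. Rearranging gives $\epsilon = \parallel q\mu \parallel - M\,\parallel q\tau \parallel < q\,AB^{-k}$. On the other hand, the hypothesis $k \ge \log(Aq/\epsilon)/\log B$ together with $B > 1$ exponentiates to $B^{k} \ge Aq/\epsilon$, that is $q\,AB^{-k} \le \epsilon$. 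Chaining the two displays produces $\epsilon < q\,AB^{-k} \le \epsilon$, which is absurd; hence no admissible triple $(m,n,k)$ exists.

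I expect the only real obstacle to be careful bookkeeping rather than a deep idea: one must set up the integer $qn - mp$ so that the nearest-integer estimate $\parallel q\mu \parallel \le |q\mu - (qn - mp)|$ applies, and one must keep the inequality inherited from $|m\tau - n + \mu| < AB^{-k}$ strict so that it beats the non-strict inequality coming from the lower bound on $k$. It is precisely this clash of a strict with a non-strict inequality that seals the contradiction. The one secondary point worth spelling out explicitly is why the convergent property $|q\tau - p| = \parallel q\tau \parallel$ is valid under $q > 6M$, since every subsequent estimate rests on that equality together with $m \le M$.
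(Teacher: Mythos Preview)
Your argument is correct and is exactly the standard contradiction proof of the Dujella--Peth\H{o} reduction lemma: multiply through by $q$, recognise $qn-mp$ as an integer so that $\parallel q\mu\parallel$ is bounded by the resulting expression, and play the strict inequality $|m\tau-n+\mu|<AB^{-k}$ off against the non-strict bound $B^{k}\ge Aq/\epsilon$. The paper itself does not prove this lemma at all---it is quoted from \cite{DP} (via \cite{Brav}) without argument---so there is no in-paper proof to compare against; your write-up supplies precisely the proof one finds in the original source, with the minor bonus that you make explicit why the convergent property forces $|q\tau-p|=\parallel q\tau\parallel$ once $q>6M$.
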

We also mention a known fact of the exponential function, which is stated as a Lemma for further reference.
\begin{lemma}\label{l3}
	For any non-zero real number $x$, we have
\begin{align*}
	\text{If}~ x<0 ~\text{and}~\mid e^{x}-1\mid<\frac{1}{2},~\text{then}~\mid x\mid<2\mid e^{x}-1\mid.
\end{align*}
\end{lemma}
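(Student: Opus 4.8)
The plan is to reduce this to an elementary one-variable calculus inequality by first pinning down the exact interval in which $x$ lives. Since $x<0$ we have $e^{x}<1$, so $e^{x}-1<0$ and therefore $\mid e^{x}-1\mid = 1-e^{x}$. The hypothesis $\mid e^{x}-1\mid<\tfrac{1}{2}$ then reads $1-e^{x}<\tfrac{1}{2}$, i.e. $e^{x}>\tfrac{1}{2}$, which is precisely $x>-\log 2$. Combined with $x<0$, the two hypotheses confine $x$ to the open interval $(-\log 2,\,0)$, and the target inequality $\mid x\mid<2\mid e^{x}-1\mid$ becomes $-x<2(1-e^{x})$.

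Next I would set $y:=-x$, so that $y$ ranges over $(0,\log 2)$ and the goal is to show $y<2(1-e^{-y})$ on this interval. To this end, introduce the auxiliary function
\begin{align*}
f(y):=2\bigl(1-e^{-y}\bigr)-y,\qquad y\in[0,\log 2).
\end{align*}
One computes $f(0)=0$ and $f'(y)=2e^{-y}-1$. For $y<\log 2$ we have $e^{-y}>e^{-\log 2}=\tfrac{1}{2}$, hence $f'(y)>0$ throughout $(0,\log 2)$. Therefore $f$ is strictly increasing on $[0,\log 2)$, so $f(y)>f(0)=0$ for every $y\in(0,\log 2)$. Unwinding the substitution gives $-x<2(1-e^{x})=2\mid e^{x}-1\mid$, which is exactly $\mid x\mid<2\mid e^{x}-1\mid$, as required.

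There is no substantial obstacle here; the statement is elementary. The only point demanding care is the translation of the two absolute-value hypotheses into the interval $(-\log 2,0)$ for $x$, since the sign flip coming from $x<0$ must be handled correctly when removing $\mid\,\cdot\,\mid$ from $e^{x}-1$. It is worth noting that the bound $\tfrac{1}{2}$ in the hypothesis is exactly calibrated to the problem: the threshold $x=-\log 2$ is precisely where $f'$ changes sign, so the hypothesis $\mid e^{x}-1\mid<\tfrac{1}{2}$ is what guarantees that $f$ remains increasing. An alternative derivation writes $t:=e^{x}-1\in(-\tfrac{1}{2},0)$ and uses the Taylor expansion $x=\log(1+t)$, after which $\mid x\mid<2\mid t\mid$ follows by dominating the tail of the logarithm series by a geometric series; however, the monotonicity argument above is the cleanest route to the stated inequality.
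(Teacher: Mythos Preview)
Your argument is correct and complete: the reduction to the interval $(-\log 2,0)$ is handled properly, and the monotonicity computation $f'(y)=2e^{-y}-1>0$ for $y<\log 2$ cleanly yields the claim.

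As for comparison with the paper: there is nothing to compare against. The paper introduces this lemma with the sentence ``We also mention a known fact of the exponential function, which is stated as a Lemma for further reference'' and then states it without proof. So you have supplied a proof where the authors chose to omit one, and your elementary calculus argument is a standard and fully rigorous way to justify this well-known inequality.
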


\section{Proof of the Main Result.}
\subsection{The low range.}
A computer search for solutions of Diophantine equation $(1)$ in the ranges $0\leq d_{2}<d_{1}\leq 9$ and $1\leq m_{2}\leq m_{1}\leq n\leq 250$ gives all solutions listed in Theorem $1$. Now, we assume that $n > 250$.
\subsection{The initial bound on $n$.}
To begin with, we consider the Diophantine equation $(\ref{eq1})$, and rewrite it as
\begin{align}\label{eq4}
N_n & = \overline{\underbrace{d_1\cdots d_1}_\text{$m_{1}$ times} \underbrace{d_2\cdots d_2}_\text{$m_{2}$ times} } \nonumber \\
& = \overline{\underbrace{d_1\cdots d_1}_\text{$m_{1}$ times}}.10^{m_{2}} + \overline{ \underbrace{d_2\cdots d_2}_\text{$m_{2}$ times}} \nonumber \\
&=d_{1} \left( \frac{10^{m_{1}} - 1}{9} \right).10^{m_{2}}+d_{2} \left( \frac{10^{m_{2}} - 1}{9} \right)\nonumber\\
& = \frac{1}{9} \left(d_1.10^{m_{1} + m_{2}} - (d_1 - d_2). 10^{m_{2}} - d_2 \right).
\end{align}
Next, we state and prove the following lemma which relates the size of $n$ and $m_{1}+m_{2}$.
\begin{lemma}\label{l4}
All solutions of $\eqref{eq4}$ satisfy
\begin{align*}
(m_{1} + m_{2}) \log 10 - 2 < n \log \alpha < (m_{1} + m_{2}) \log 10 + 1.
\end{align*}
\end{lemma}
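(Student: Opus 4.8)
The plan is to sandwich $N_n$ between powers of $10$ using the concatenation structure and then compare with the exponential growth of $N_n$ in $\alpha$. The starting point is that the right-hand side of $\eqref{eq4}$, being a concatenation of $m_1$ copies of $d_1$ followed by $m_2$ copies of $d_2$ with $d_1 \ge 1$, is a positive integer with exactly $m_1 + m_2$ decimal digits. Hence $10^{m_1 + m_2 - 1} \le N_n < 10^{m_1 + m_2}$, which one can also read off directly from $\eqref{eq4}$: the dominant term $\tfrac19 d_1 10^{m_1+m_2}$ forces $N_n < d_1 10^{m_1+m_2}/9 \le 10^{m_1+m_2}$, while dropping the (non-positive contribution of the) subtracted terms and using $d_1 \ge 1$ gives $N_n \ge 10^{m_1+m_2-1}$ after a short estimate.

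Next I would combine these two inequalities with $\eqref{eq3}$, namely $\alpha^{n-2} \le N_n \le \alpha^{n-1}$. From $N_n < 10^{m_1+m_2}$ and $\alpha^{n-2} \le N_n$ we get $\alpha^{n-2} < 10^{m_1+m_2}$, so $n \log \alpha < (m_1+m_2)\log 10 + 2\log\alpha$; since $2\log\alpha \approx 0.764 < 1$ this yields the upper bound $n \log \alpha < (m_1+m_2)\log 10 + 1$. In the other direction, from $10^{m_1+m_2-1} \le N_n \le \alpha^{n-1}$ we obtain $(m_1+m_2-1)\log 10 \le (n-1)\log\alpha$, hence $(m_1+m_2)\log 10 - \log 10 \le n\log\alpha - \log\alpha$, i.e. $(m_1+m_2)\log 10 - n\log\alpha \le \log 10 - \log\alpha < 2$; rearranging gives $(m_1+m_2)\log 10 - 2 < n\log\alpha$. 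Putting the two halves together is exactly the claimed double inequality.

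The only place requiring a little care — and the step I would treat as the main (mild) obstacle — is justifying the lower bound $N_n \ge 10^{m_1+m_2-1}$ cleanly from $\eqref{eq4}$ rather than from the digit-count heuristic, since one must check that the subtracted terms $(d_1-d_2)10^{m_2} + d_2$ do not eat into the leading term enough to drop the digit count; but this is immediate because $0 \le (d_1 - d_2)10^{m_2} + d_2 < 10^{m_2+1} \le 10^{m_1+m_2}$ forces $N_n$ to have the same number of digits as $d_1 10^{m_1+m_2}/9$ suggests, and in any case the elementary bound $N_n \ge d_1 10^{m_1+m_2-1} \ge 10^{m_1+m_2-1}$ can be read off directly. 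Everything else is routine arithmetic with the numerical value $\alpha \approx 1.46557$, so once the two power-of-ten bounds on $N_n$ are in hand the lemma follows by the two short manipulations above.
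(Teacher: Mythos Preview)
Your proposal is correct and follows essentially the same route as the paper: you sandwich $N_n$ between $10^{m_1+m_2-1}$ and $10^{m_1+m_2}$ using the digit count, combine with the bounds $\alpha^{n-2}\le N_n\le \alpha^{n-1}$ from \eqref{eq3}, and finish with the numerical estimates $2\log\alpha<1$ and $\log 10-\log\alpha<2$. The only difference is cosmetic---you spend a few extra lines justifying the lower bound $N_n\ge 10^{m_1+m_2-1}$ directly from \eqref{eq4}, whereas the paper simply asserts it.
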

\begin{proof}
\noindent Using $(\ref{eq3})$ and $(\ref{eq4})$, we get 
\begin{align*}
\alpha^{n-2} \le N_n < 10^{m_{1} + m_{2}}.
\end{align*}
Taking the logarithm on both sides, we have that 
\begin{align*}
(n-2)\log\alpha<(m_{1}+m_{2})\log 10,
\end{align*}
leading to
\begin{align}\label{eq5}
n\log\alpha<(m_{1}+m_{2})\log 10+2\log\alpha<(m_{1}+m_{2})\log 10+1	
\end{align}
For the lower bound, we obtain
\begin{align*}
10^{m_{1}+m_{2}-1}<N_{n}\leq\alpha^{n-1},
\end{align*}
and taking logarithms on both sides, we get
\begin{align*}
	(m_{1}+m_{2}-1)\log 10<(n-1)\log\alpha,
\end{align*}
which results to
\begin{align}\label{eq6}
	(m_{1}+m_{2})\log 10-2<(m_{1}+m_{2}-1)\log 10+\log\alpha<n\log\alpha.
\end{align}
Comparing $(\ref{eq5})$ and $(\ref{eq6})$ completes the proof of Lemma \ref{l4}.
\end{proof}

\noindent Next, we proceed to investigate $(\ref{eq4})$ in the following two step:
\medskip

\noindent \textbf{Step 1.} Using $(\ref{eq2})$ and $(\ref{eq4})$, we have
\begin{align*}
(a\alpha^n + b \beta^n + c\gamma^n )= \frac{1}{9}(d_1.10^{m_{1} + m_{2}} - (d_1-d_2).10^{m_{2}} - d_2).
\end{align*}
Equivalently,
\begin{align*}
9 a\alpha^n  - d_1.10^{m_{1} + m_{2}} = -9t(n) - (d_1-d_2).10^{m_{2}} - d_2.
\end{align*}
Thus, we have that
\begin{align*}
|9 a\alpha^n  - d_1.10^{m_{1}+m_{2}}| & = |-9t(n) - (d_1-d_2).10^{m_{2}} -d_2| \\
& \leq 9\alpha^{-n/2}+9.10^{m_{2}}+9 \\
& < 28.10^{m_{2}},
\end{align*}
 where we used the fact that $n > 250$. Dividing both sides of the inequality by $d_1.10^{m_{1} + m_{2}}$ gives
\begin{align}\label{eqn7}
\left | \left( \frac{9a}{d_1} \right).\alpha^n.10^{-m_{1}-m_{2}} - 1  \right | < \frac{28. 10^{m_{2}}}{d_1.10^{m_{1} +m_{2}}} < \frac{28}{10^{m_{1}}}.
\end{align}

Let
\begin{align}\label{eq8}
\Lambda_1 := \left( \frac{9a}{d_1} \right). \alpha^n.10^{-m_{1} -m_{2}} - 1.
\end{align}
We then proceed to apply Theorem $\ref{thm2}$ on (\ref{eq8}). First, observe that $\Lambda_1 \neq 0$. If it were, then we would have that $$a \alpha^n = \frac{d_{1}}{9}.10^{m_{1}+m_{2}}.$$ In this case therefore, applying the automorphism $\sigma$ of the Galois group $\mathcal{G}$ on both sides of the preceeding equation and taking absolute values, we obtain
\begin{align*}
\left|\left( \frac{d_{1}}{9}.10^{m_{1}+m_{2}} \right)\right| = |\sigma (a \alpha^n)| = |c\gamma^n| < 1,
\end{align*}
which is false. Thus, we have $\Lambda_1 \neq 0$.
\medskip

\noindent Theorem $\ref{thm2}$ is then applied on $(\ref{eq8})$ with the following parameters:
\begin{align*}
\eta_1: = \frac{9a}{d_1}, \ \eta_2: = \alpha, \ \eta_3: = 10, \ b_1: = 1, \ b_2: = n, \ b_3: = -m_{1} - m_{2}, \ t: = 3.
\end{align*}
From Lemma $\ref{l4}$, we have that $m_{1} + m_{2} < n$. Consequently, we choose $B := n$. Notice that $\mathbb{K}: = \mathbb{Q}(\eta_1, \eta_2, \eta_3) = \mathbb{Q}(\alpha)$, since $a=\displaystyle\frac{\alpha^{2}}{\alpha^{3}+2}$. Therefore, $D: = [\mathbb{K}: \mathbb{Q}] = 3$.
\medskip

\noindent Using the properties of the logarithmic height, we estimate $h(\eta_{1})$ as follows:
\begin{align*}
h(\eta_1) &= h \left( \frac{9a}{d_1} \right)\\
 &\le h(9)+h(a)+h(d_{1})\\
&\leq \log 9+\frac{1}{3}\log 31+\log 9\\
&<2.41
\end{align*}
Similarly, we that $h(\eta_2) = h(\alpha) = \displaystyle\frac{\log \alpha}{3}$ and $h(\eta_3) = h(10)= \log 10$. Therefore, we choose
\begin{align*}
A_1: = 7.23, \ A_2: = \log \alpha,~\text{and}~ \ A_3: = 3 \log 10.
\end{align*}
By Theorem $\ref{thm2}$, we get
\begin{align*}
\log |\Lambda_1| &> -1.4(30^6)(3^{4.5})(3^2)(1 + \log 3)(1 + \log n)(7.23)(\log \alpha)(3 \log 10)\\& > -6.9\times 10^{12}(1 + \log n), 
\end{align*}
which when compared with $(\ref{eqn7})$ gives
\begin{align*}
m_{1} \log 10 - \log 28 < 6.9\times 10^{12}(1 + \log n),
\end{align*}
leading to
\begin{align}\label{eq9}
m_{1} \log 10 < 7.0\times 10^{12}(1 + \log n).
\end{align}

\noindent \textbf{Step 2.} Rewriting $(\ref{eq4})$, we obtain 
\begin{align*}
9 a\alpha^n  - (d_{1}.10^{m_{1}}-(d_{1}-d_{2})).10^{m_{2}}= -9t(n) - d_2,
\end{align*}
which shows that
\begin{align*}
|9a \alpha^n  - (d_1.10^{m_{1}} - (d_1-d_2)).10^{m_{2}}| & = |-9t(n) - d_2| \\
& \leq 9\alpha^{-n/2}+9<18.
\end{align*}
Dividing both sides of the inequality by $9 a\alpha^n$, we have that
\begin{align}\label{eq10}
\left| \left( \frac{d_1.10^{m_{1}} - (d_1 - d_2)}{9a} \right).\alpha^{-n}.10^{m_{2}}- 1 \right| & < \frac{18}{9 a\alpha^n}< \frac{2}{\alpha^n}
\end{align}
Now, we let
\begin{align*}
\Lambda_2 :=  \left( \frac{d_1.10^{m_{1}} - (d_1 - d_2)}{9a} \right).\alpha^{-n}.10^{m_{2}}- 1
\end{align*}
Using similar arguments as in $\Lambda_{1}$, we apply Theorem $\ref{thm2}$ on $\Lambda_{2}$. We notice that $\Lambda_{2}\neq 0$. If it were, then we would have that 
\begin{align*}
a\alpha^n = \left( \frac{d_1.10^{m_{1}} - (d_1 - d_2)}{9} \right).10^{m_{2}}.
\end{align*}
Applying the automorphism $\sigma$ of the Galois group $\mathcal{G}$ on both sides, and taking the absolute values, we obtain
\begin{align*}
\left| \left( \frac{d_1.10^{m_{1}} - (d_1 - d_2)}{9} \right).10^{m_{2}} \right| = |\sigma( a\alpha^n)| = | c\gamma^n | < 1,
\end{align*}
which is false. Therefore, $\Lambda_{2}\neq 0$. We then proceed to apply Theorem \ref{thm2} with the following parameters:
\begin{align*}
\eta_1 := \left(\frac{d_1.10^{m_{1}} - (d_1-d_2)}{9a} \right), \ \eta_2 := \alpha, \ \eta_3: = 10, \ b_1: = 1, \ b_2: = -n, \ b_3 := m_{2}, \ t: = 3.
\end{align*}
Since $m_{2}<n$, we take $B:=n$.
Again, taking $\mathbb{K}: = \mathbb{Q}(\eta_1, \eta_2, \eta_3) = \mathbb{Q}(\alpha)$, we have that $D: = [\mathbb{K}: \mathbb{Q}] = 3$. 
\medskip
Next, we use the properties of the logarithmic height to estimate $h(\eta_{1})$ as before, and obtain
\noindent 
\begin{align*}
h(\eta_1) & = h \left(\frac{d_1.10^{m_{1}} - (d_1-d_2)}{9a} \right)\\
&\leq h(d_{1}.10^{m_{1}}-(d_{1}-d_{2}))+h(9A)\\
&\leq h(d_{1}.10^{m_{1}})+h(d_{1}-d_{2})+h(9)+h(a)+\log 2\\
& \leq h(d_{1})+m_{1}h(10)+h(d_{1})+h(d_{2})+h(9)+h(a)+2\log 2\\
& \leq  m_{1}\log 10 +4\log 9+\displaystyle\frac{1}{3}31 + 2\log 2\\
& \leq 7.0\times 10^{12}(1 + \log n) + 4\log 9 +\displaystyle\frac{1}{3}\log 31 + 2\log 2\\
& < 7.1\times 10^{12}(1 + \log n).
\end{align*}
\medskip

\noindent We then take 
\begin{align*}
A_1: = 2.13 \times 10^{13} (1 + \log n),~A_2: = \log \alpha~\text{and}~A_3: = 3 \log 10. 
\end{align*}
\medskip

\noindent Theorem $\ref{thm2}$ says that
\begin{align*}
\log |\Lambda_2| & > -1.4(30^6)(3^{4.5})(3^2)(1 + \log 3)(1 + \log n)(2.13 \times 10^{13} (1 + \log n))(\log\alpha)(3\log 10)\\
& > ~-2.0 \times 10^{25}(1 + \log n)^2,
\end{align*}
and comparison of this inequality with $\eqref{eq10}$ gives
\begin{align}\label{eq7}
n \log \alpha -\log 2< 2.0 \times 10^{25}(1 + \log n)^2,
\end{align}
which simplifies to
\begin{align*}
n < 8.0 \times 10^{25}( \log n)^2.
\end{align*}
Next, we apply Lemma $\ref{l3}$ that enables us to find an upper bound of $n$, with following parameters:$$r: = 2,\ L: = n,\ \text{and} \ H: = 8.0 \times 10^{25}.$$ 
Therefore, we have
\begin{align*}
n < 2^2 (8.0 \times 10^{25})(\log 8.0 \times 10^{25})^2,
\end{align*}
which results to 
\begin{align*}
n < 2.15 \times 10^{29}.
\end{align*}
By Lemma $\ref{l4}$, we have that
\begin{align*}
m_{1} + m_{2} < 3.56 \times 10^{28}.
\end{align*}
The following lemma is a result of the proof that has been completed.
\begin{lemma}\label{l5}
All solutions to $(\ref{eq4})$ satisfy
\begin{align*}
m_{1} + m_{2} < 3.56 \times 10^{28} \quad \text{and} \quad n < 2.15 \times 10^{29}.
\end{align*}
\end{lemma}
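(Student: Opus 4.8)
The plan is to turn \eqref{eq4} into two independent ``small'' linear forms in three logarithms, apply Theorem~\ref{thm2} to each in turn --- the first producing a bound on $m_1$ in terms of $n$, the second a bound on $n$ in terms of itself --- then use Lemma~\ref{l1} to convert the latter into an absolute bound on $n$, and finally feed that into Lemma~\ref{l4} to bound $m_1+m_2$.

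First I would substitute the Binet formula \eqref{eq2} into \eqref{eq4}, move the two dominant terms $9a\alpha^n$ and $d_1\cdot 10^{m_1+m_2}$ to one side, and bound the remaining terms using $|t(n)|<\alpha^{-n/2}$ together with $n>250$; dividing by $d_1\cdot 10^{m_1+m_2}$ then gives $|\Lambda_1|<28\cdot 10^{-m_1}$, where $\Lambda_1=(9a/d_1)\,\alpha^n 10^{-m_1-m_2}-1$. To apply Theorem~\ref{thm2} I need $\Lambda_1\neq 0$: if it vanished then $a\alpha^n=(d_1/9)10^{m_1+m_2}$, and applying the automorphism $\sigma\colon\alpha\mapsto\beta$ and taking absolute values would force a positive integer to equal $|c\gamma^n|<1$, a contradiction. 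Working in $\mathbb{K}=\mathbb{Q}(\alpha)$ (using $a=\alpha^2/(\alpha^3+2)$, so $D=3$), I would estimate $h(\eta_1)=h(9a/d_1)\le h(9)+h(a)+h(d_1)$ and the trivial heights $h(\alpha)=\tfrac13\log\alpha$, $h(10)=\log 10$, plug everything into Theorem~\ref{thm2} with $B=n$, and compare with the upper bound on $|\Lambda_1|$ to reach an inequality of the shape $m_1\log 10 < 7.0\times 10^{12}(1+\log n)$.

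For the second form I would rearrange \eqref{eq4} so that the dominant terms are $9a\alpha^n$ and $(d_1\cdot 10^{m_1}-(d_1-d_2))\,10^{m_2}$; dividing by $9a\alpha^n$ yields $|\Lambda_2|<2\alpha^{-n}$ with $\Lambda_2=\bigl((d_1\cdot 10^{m_1}-(d_1-d_2))/(9a)\bigr)\alpha^{-n}10^{m_2}-1$, and $\Lambda_2\neq 0$ by the same $\sigma$-argument. The crucial point is that the leading number $\eta_1$ now has height of size roughly $m_1\log 10$, so I would insert the Step~1 bound to get $h(\eta_1)<7.1\times 10^{12}(1+\log n)$; Theorem~\ref{thm2} then gives $n\log\alpha < 2.0\times 10^{25}(1+\log n)^2$, hence $n<8.0\times 10^{25}(\log n)^2$. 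Applying Lemma~\ref{l1} with $r=2$, $L=n$, $H=8.0\times 10^{25}$ (after checking $H>(4r^2)^r$ and $H>L/(\log L)^2$) gives $n<2^2H(\log H)^2<2.15\times 10^{29}$, and Lemma~\ref{l4}, via $(m_1+m_2)\log 10-2<n\log\alpha$, then yields $m_1+m_2<3.56\times 10^{28}$.

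The main obstacle I anticipate is the bootstrapping structure: the height of the leading algebraic number in each linear form grows with the very exponents being bounded, so the two applications of Theorem~\ref{thm2} must be chained in the correct order (bound $m_1$ first, then $n$), and the resulting estimate has $n$ on both sides --- precisely the situation Lemma~\ref{l1} is tailored to resolve. The remaining work is routine but must be done carefully: evaluating the constant $1.4\times 30^{t+3}t^{4.5}D^2(1+\log D)$ for $t=3$, $D=3$, tracking the $(1+\log n)$ factors, and keeping all the numerical constants in the height bounds honest.
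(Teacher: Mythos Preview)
Your proposal is correct and follows essentially the same approach as the paper: the two linear forms $\Lambda_1$ and $\Lambda_2$, the $\sigma$-argument for non-vanishing, the height estimates, the chaining of Theorem~\ref{thm2} to bound first $m_1$ and then $n$, and the final applications of Lemma~\ref{l1} and Lemma~\ref{l4} are all exactly as in the paper. The only differences are cosmetic (you spell out the hypothesis check for Lemma~\ref{l1}, which the paper omits).
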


\subsection{Reducing the bounds.}
\noindent Using $(\ref{eqn7})$, let
\begin{align*}
\Gamma_1: = - \log (\Gamma_1 + 1) = (m_{1} + m_{2})\log 10 - n\log \alpha - \log \left(  \frac{9a}{d_1} \right).
\end{align*}
 Notice that $(\ref{eqn7})$ is rewritten as
\begin{align*}
\left| e^{- \Gamma_1} - 1 \right| < \frac{28}{10^{m_{1}}}.
\end{align*}
Observe that $-\Gamma_1\neq 0$, since $e^{-\Gamma_1}-1=\Lambda_{1}\neq 0$.
 Assume that $m_{1}\ge 2$, then $$\left| e^{- \Gamma_1} - 1 \right| < \displaystyle\frac{7}{25}<\displaystyle\frac{1}{2}.$$ Therefore, by Lemma \ref{l3}, we have that $$\left|\Gamma_1 \right| < \frac{56}{10^{m_{1}}}.$$ 
Substituting $\Gamma_1$ in the above inequality with its value and dividing through by $\log \alpha$, we obtain
\begin{align*}
\left| (m_{1}+m_{2})\left(\frac{\log 10}{\log \alpha}\right) - n + \left( \frac{\log (d_{1}/9a)}{\log \alpha} \right)  \right| < \frac{56}{10^{m_{1}}\log \alpha}.
\end{align*}
 Thus, applying Lemma \ref{l2}, we obtain:
\begin{align*}
\tau: = \frac{\log 10}{\log \alpha}, \quad \mu (d_1): = \frac{\log (d_{1}/9a)}{\log \alpha}, \quad A := \dfrac{56}{\log\alpha}, \quad B: = 10, \quad \text{and} \quad 1 \leq d_1 \leq 9.
\end{align*}
We choose $M: = 10^{29}$ as the upper bound on $m_{1}+m_{2}$. A quick computation using Mathematica program gives $ q=q_{53} > 6M$, and $\epsilon = 0.0168612 $. Thus, we have that
\begin{align*}
m_{1} \leq \frac{\log (56/\log\alpha)q/\epsilon)}{\log 10} < 34.
\end{align*}
This implies that $m_{1}\leq 34$.
\medskip

\noindent For fixed $0\leq d_{2}< d_{1}\leq 9$ and $1\leq m_{1}\leq 34$, we use (\ref{eq10}) and write
\begin{align*}
\Gamma_2:= m_{2}\log 10-n\log\alpha+\log\left(\frac{d_{1}.10^{m_{1}}-(d_{1}-d_{2})}{9a}\right).
\end{align*}
Now, we rewrite (\ref{eq10}) as 
\begin{align*}
	\left| e^{ \Gamma_2} - 1 \right| < \frac{2}{\alpha^{n}}.
\end{align*}
We notice that $\Gamma_2\neq 0$, since $e^{\Gamma_2}-1=\Lambda_{2}\neq 0$.
For $n > 250$, we have that $$\left| e^{\Gamma_2} - 1 \right| < \displaystyle\frac{1}{2}.$$ Therefore, applying Lemma \ref{l3} gives $$\left|\Gamma_2 \right| < \frac{4}{\alpha^{n}}.$$ 
Replacing $\Gamma_2$ in the above inequality with its value and dividing through by $\log \alpha$, we obtain
\begin{align*}
	\left| m_{2}\left(\frac{\log 10}{\log \alpha}\right) - n + \left( \frac{\log (d_{1}.10^{m_{1}}-(d_{1}-d_{2})/9a)}{\log \alpha} \right)  \right| < \frac{4}{\alpha^{n} \log \alpha}.
\end{align*}
Similarly, by Lemma \ref{l2}, we get the following:
\begin{align*}
	\tau: = \frac{\log 10}{\log \alpha}, \quad \mu (d_1,d_2): = \frac{\log (d_{1}.10^{m_{1}}-(d_{1}-d_{2})/9a)}{\log \alpha}, \quad A := \dfrac{4}{\log\alpha}, \quad B: = \alpha.
\end{align*}

\noindent Since we have that $m_{2}<(m_{1}+m_{2})$, we take $m_{2} < M:= 10^{29}$. Computation using Mathematica program yields $q_{53} > 6M$, and  $ \epsilon = 0.000918645 $. Therefore, we have that
\begin{align*}
n\le \dfrac{\log((4/\log\alpha)q/\epsilon)}{\log\alpha}<200.
\end{align*}
Therefore, $ n\le 200 $, which contradicts the assumption that $ n>250 $. This completes the proof of  Theorem \ref{thm1x}. \qed


\section*{Addresses}

 Department of Mathematics, Makerere University, P.O. Box 7062 Kampala, Uganda


\noindent
Email: \url{mahadi.ddamulira@mak.ac.ug} 

\vspace{0.5cm}

\noindent
$ ^{*} $ Department of Mathematics, Makerere University, P.O. Box 7062 Kampala, Uganda

\noindent 
Email: \url{paul.emong@gmail.com}

\vspace{0.5cm}

\noindent
 Department of Mathematics, Makerere University, P.O. Box 7062 Kampala, Uganda

\noindent
Email:\url{ismail.mirumbe@mak.ac.ug}

\end{document}